\documentclass{amsart}
\usepackage{hyperref,enumerate}
\usepackage{graphicx}
\usepackage{color}

\DeclareMathOperator{\Ric}{Ric}
\DeclareMathOperator{\inj}{inj}

\newtheorem{theorem}{Theorem}[section]
\newtheorem{lemma}[theorem]{Lemma}
\newtheorem{claim}[theorem]{Claim}

\theoremstyle{definition}

\newtheorem{remark}[theorem]{Remark}
\numberwithin{equation}{section}

\subjclass[2020]{53C24, 53C25}

\title[]{Einstein four-manifolds of positive sectional curvature and ADM mass}

\author{Matthew J. Gursky}
\address{Department of Mathematics \\
         University of Notre Dame\\
         Notre Dame, IN 46556}
\email{\href{mgursky@nd.edu}{mgursky@nd.edu}}

\author{Andrea Malchiodi}
\address{Scuola Normale Superiore \\
         Piazza dei Cavalieri 7, 56126 \\
         Pisa, Italy}
\email{\href{andrea.malchiodi@sns.it}{andrea.malchiodi@sns.it}}

\begin{document}

\begin{abstract}    
	We prove that closed Einstein four-manifolds $(M,g)$ with positive sectional curvature and 
	Euler characteristic and signature satisfying $2\chi(M)-3|\tau(M)|\le4$ 
	must be homothetically isometric to the round $S^4$ or to $\mathbb{CP}^2$ with 
	the Fubini-Study metric. The proof relies on an upper bound on the ADM mass  
	of conformal blow-ups of $(M,g)$, combined with a lower one from \cite{GM}. 
	
\end{abstract}

\maketitle

\section{Introduction and Statement of Main results}

The main result of this paper is the following:

\begin{theorem} \label{MainThm}  Suppose $M$ is a smooth, closed, oriented four-manifold, and $g$ is an Einstein metric with positive sectional curvature.
 If 
\begin{equation}\label{top}
2\chi(M)-3|\tau(M)|\le4,
\end{equation}
then $(M, g)$ is homothetically isometric to the round metric on $S^4$ or the
Fubini-Study metric on $\pm \mathbb{CP}^2$.   
\end{theorem}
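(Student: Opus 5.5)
We take the route the abstract indicates. Fix a point $p\in M$ and form the conformal blow-up $\hat g = G_p^2 g$ on $\hat M = M\setminus\{p\}$, where $G_p$ is the Green's function of the conformal Laplacian $L_g=-6\Delta_g+R_g$, normalized so that $\hat g$ is scalar-flat and asymptotically flat. Its ADM mass $m(p)$ is then defined. The plan is to squeeze $m(p)$ between two bounds: a lower bound from \cite{GM} and an upper bound that uses positive sectional curvature. Equality must then hold everywhere, and this forces rigidity.

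\emph{Lower bound.} We use the estimate of \cite{GM}, in the form available for Einstein metrics. It bounds the mass of the blow-up from below by a quantity controlled by the $L^2$ norms of the self-dual and anti-self-dual Weyl tensors, i.e.\ essentially by $\int|W^\pm|^2$. By Gauss--Bonnet--Chern and Hirzebruch, for an Einstein metric these norms are expressed through $\chi(M)$, $\tau(M)$ and the volume/scalar curvature:
\[
8\pi^2\chi=\int|W|^2+\tfrac1{24}\int R^2, \qquad 12\pi^2\tau=\int(|W^+|^2-|W^-|^2).
\]
Choosing the orientation with $\tau\ge0$ and normalizing $R=12$, the combination $2\chi-3|\tau|$ equals $\tfrac{1}{4\pi^2}\big(\int 2|W^-|^2+\text{const}\cdot\mathrm{Vol}\big)$. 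Hypothesis \eqref{top} therefore gives an upper bound on $\int|W^-|^2$ in terms of the volume. I expect the lower mass bound of \cite{GM} to be nonnegative, with some explicit constant, and to take the form required here.

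\emph{Upper bound.} This is the new step and the main obstacle. Near $p$ we expand $G_p$ in conformal normal coordinates, $G_p = \tfrac{1}{|x|^2}+A+O(|x|)$, so that $m(p)$ is a multiple of $A$. We want to show $A\le C(\text{curvature at } p)$, or a global integral bound, using $\sec>0$. The approach is to look at the Einstein metric in the ball $B_{\pi/\sqrt{K}}$-type region given by the injectivity radius. Klingenberg-type estimates under $\sec>0$ with $\Ric = 3g$ give $\inj\ge \pi/\sqrt{\max\sec}$. We then compare $G_p$ with the explicit Green's function of the round sphere via Laplacian comparison for the distance function: $\Delta r \le 3\cot r$ from Ricci, together with lower bounds from the upper sectional bound. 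This yields a pointwise comparison $G_p \le G_{S^4}(r)+(\text{correction})$, hence an upper bound for $A$ and so for $m(p)$. Averaging over $p$ should convert the bound into an integral of $|W|^2$ with the opposite sign to the lower bound. The delicate part is keeping the constants sharp enough that the two bounds meet exactly at equality in \eqref{top}. If pointwise comparison proves too lossy, the fallback is a Bochner/Weitzenböck identity for $\nabla G_p$ integrated against $\hat g$, in which the term $W(\nabla G,\cdot,\cdot,\nabla G)$ is controlled by the sectional bound.

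\emph{Rigidity.} Combining the bounds with \eqref{top} forces equality in every inequality used. On the lower-bound side, equality should give $W^-\equiv0$, so $g$ is half-conformally flat and Einstein with positive scalar curvature. Hitchin's classification then leaves only the round $S^4$ and Fubini--Study $\mathbb{CP}^2$, up to homothety and orientation, which is the conclusion of Theorem \ref{MainThm}.
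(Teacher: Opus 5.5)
Your plan rests on two bounds for $m$ that meet exactly at equality in \eqref{top}, with the equality case forcing $W^-\equiv0$. Nothing supports this, and it is not how the argument closes. A comparison-geometry upper bound gives a pointwise, purely numerical bound on $A$ with no Weyl-curvature term. So there is nothing for ``averaging over $p$'' to turn into $\int|W|^2$, and you give no reason why equality would yield $W^-\equiv0$.

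The missing idea is topological. If $g$ is self-dual or anti-self-dual, Hitchin's theorem finishes. Otherwise, use the Gursky--LeBrun obstruction $9\ge\chi>\frac{15}{4}|\tau|$ from \cite{GL}. It gives $\frac65\chi<2\chi-3|\tau|\le4$, so $\chi<\frac{10}{3}$ and $|\tau|<\frac89$, hence $\tau=0$. Since $b_1=0$ by Synge, $\chi=2$. This is the only place \eqref{top} is needed, and it is exactly the setting of \cite[Theorem~1.5]{GM}. That theorem gives a dichotomy: either $g$ is round or $m\ge\frac{22}{3}$. Rigidity then comes from a \emph{strict} numerical gap, $m\le\frac{45}{7}<\frac{22}{3}$, with no equality analysis.

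Your upper bound is also only a hope, and the comparison you name is too weak. For the barrier $\Psi=\csc^2r$ one has $(\Delta-2)\Psi=\frac{2\cos r}{\sin^3r}(3\cot r-\Delta r)$. So you need a lower bound on $\Delta r$ that agrees with $3\cot r$ to order $r$ at $p$. Hessian comparison with $K\le3$ gives only $\Delta r\ge3\sqrt3\cot(\sqrt3r)=\frac3r-3r+O(r^3)$. This leaves an error of about $4r^{-2}$ near $p$, which no bounded correction absorbs, so you get no bound on $A$.

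The key lemma you need is $\Delta r\ge\frac2r+\sqrt3\cot(\sqrt3r)$ on $(0,\pi/2]$. Write the eigenvalue bounds $\sqrt3\cot(\sqrt3r)\le\lambda_i\le\frac1r$ in product form to bound $|\nabla^2r|^2$ linearly in $\Delta r$. Insert this into the traced Riccati equation $\partial_r\Delta r=-|\nabla^2r|^2-3$ and integrate. An elementary ODE comparison then gives $0\le(\Delta-2)\Psi\le\frac47$ on $B_{\pi/2}(p)$.

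Extend $\Psi$ by $1$ outside this ball; this is a $C^1$ match, and by Klingenberg the cut locus lies where $\Psi$ is constant. The maximum principle then yields $G\le\Psi+\frac27$. Hence $A\le\frac13+\frac27=\frac{13}{21}$ and $m=12A-1\le\frac{45}{7}$.
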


It is conjectured that this theorem should hold without the topological restriction (\ref{top}).  Under various topological and pinching conditions there are a number of classification results in the literature, 
assuming for example lower bounds on the sectional curvature (\cite{Berger}, \cite{Liu}, \cite{Costa}), upper curvature bounds (\cite{CT22}, \cite{CS}), positivity 
of the curvature operator (\cite{Tachibana}, \cite{CW}, \cite{Wu}, \cite{WuCorr}), non-negative isotropic curvature (\cite{MW}) and pointwise pinching conditions (\cite{CT18}, \cite{BS}).

The first author and LeBrun 
\cite[Theorem A]{GL} proved that a closed oriented Einstein four-manifold
with non-negative sectional curvature and strictly positive 
intersection form is homothetically isometric to $\mathbb{CP}^2$ with the Fubini--Study
metric.  Theorem B of \cite{GL} gives the obstruction
\begin{equation}\label{eq:gl-obstruction}
9\ge\chi(M)>\frac{15}{4}|\tau(M)|
\end{equation}
for an Einstein metric with non-negative sectional curvature that is
neither self-dual nor anti-self-dual. In particular, uniqueness on
$\mathbb{CP}^2$ under nonnegative sectional curvature is already known.
The inequality \eqref{eq:gl-obstruction} is an improvement of a result of Hitchin \cite{Hitchin74}, 
who proved that for Einstein metrics of positive sectional curvature,  
$\chi(M) \geq \left( \frac{3}{2}\right)^{\frac 32} |\tau(M)|$.

The proof of Theorem \ref{MainThm} uses a very different approach from the results cited above.  First, under the assumptions of Theorem \ref{MainThm}, the work of the first author and LeBrun implies that either $g$ is homothetically isometric to the Fubini-Study metric, or $\chi(M) = 2$ and $\tau(M) = 0$ (see Section \ref{sec:rigidity} for details). Given $(M,g)$ satisfying the assumptions of the theorem, we normalize $g$ by assuming 
\begin{align} \label{normal}
\Ric = 3g.
\end{align}
We then fix a point $p \in M$, and consider the Green's function $G > 0$ of the conformal laplacian $L = \Delta_g - 2$, with pole at $p$.  We normalize $G$ so that near $p$, 
\begin{align} \label{Gnorm}
G = r^{-2} + A + O(r),
\end{align} 
where $r$ is the geodesic distance to $p$. The metric $h = G^2 g$ on $M \setminus \{ p \}$ is scalar flat and asymptotically flat of order two, hence its mass $m(h)$ is well defined.   In our previous work (see \cite[Theorem~1.5]{GM}), we showed that under the assumption $\chi(M) = 2, \tau(M) = 0$, either $g$ is round or the mass satisfies $m(h) \geq 22/3$.   Such an estimate was obtained from integral identities and gradient bounds on the Green's function for the conformal Laplacian, 
combined with the results in \cite{Gu}.  
 Therefore, Theorem \ref{MainThm} would follow under a suitable (upper) bound for the mass. Using the fact that the sectional curvature is positive and the normalization (\ref{normal}), the Hessian Comparison Theorem can be used to build a supersolution of the equation satisfied by $G$.  This gives us an upper bound for $G$, hence on the mass;  precisely we prove 
\begin{align} \label{mup}
m(h) \leq \frac{45}{7} < \frac{22}{3}, 
\end{align}
hence $g$ is round.

\vspace{-0.2cm}

\medspace 

\subsection{Acknowledgements}  M.G. is partially supported by the Simons Foundation Travel Support for Mathematicians program, Award ID: SFI-MPS-TSM-00014122.  A.M. is a member of GNAMPA, as part of INdAM, and is supported by the project {\em Ricerca di base} from Scuola Normale Superiore. 

\medspace 

\vspace{-0.2cm}

\subsection{Use of AI Disclosure}   The proof of (\ref{mup}) was found by the authors with the help of ChatGPT.  As part of a continuing effort to apply the mass estimates in our paper \cite{GM}, we prompted ChatGPT to look for upper bounds of the Green's function of an Einstein metric under various curvature conditions, and (by trial and error) discovered that the assumption of positive sectional curvature led to a relatively elementary construction.   During the writing process the preprint \cite{ChengL} was posted on the archive.  Since our proof is based on a very different idea, and the work was essentially simultaneous, we decided to finish the manuscript.

\section{Distance estimates}\label{sec:comparison}

Throughout this section, $(M^4,g)$ is as in 
of Theorem \ref{MainThm} and satisfies (\ref{normal}).  As explained in the Introduction, we fix  $p\in M$, and let 
$r=d_{g}(p,\cdot)$. We first derive a lower bound for
$\Delta r$ via the Hessian comparison theorem.  

\begin{lemma}\label{lem:distance} The distance function is
smooth on $B_{\pi/2}(p)\setminus\{p\}$ and in a neighborhood of $\{ r = \pi/2 \}$.  Also, along every radial geodesic,
\begin{equation}\label{eq:distance}
\frac2r+\sqrt3\cot(\sqrt3r)
\le\Delta r\le3\cot r,
\qquad 0<r\le\frac{\pi}{2}.
\end{equation}
\end{lemma}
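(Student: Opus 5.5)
We use the positive sectional curvature and the normalization (\ref{normal}) in three separate ways: for the injectivity radius, for the upper bound, and for the lower bound in (\ref{eq:distance}).

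\emph{Smoothness.} Let $T=\nabla r$ and complete it to an orthonormal frame $T,e_2,e_3,e_4$. By (\ref{normal}),
\[
K(T,e_2)+K(T,e_3)+K(T,e_4)=3 .
\]
Each term is positive, so every sectional curvature satisfies $0<K<3$. Since $M$ is compact, even-dimensional and orientable, Klingenberg's injectivity radius estimate applies and gives
\[
\inj(M,g)\ge \pi/\sqrt3 .
\]
As $\pi/\sqrt3>\pi/2$, the function $r$ is smooth on $B_{\pi/\sqrt3}(p)\setminus\{p\}$. This set contains $B_{\pi/2}(p)\setminus\{p\}$ together with a neighborhood of $\{r=\pi/2\}$.

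\emph{Upper bound.} Since $\Ric\ge 3g$, the standard Laplacian comparison with the unit sphere $S^4$ gives $\Delta r\le 3\cot r$ on this region.

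\emph{Lower bound: set-up.} The crude Hessian comparison only uses $K\le 3$, and yields $\Delta r\ge 3\sqrt3\cot(\sqrt3 r)$. This is weaker than what is claimed, so the extra information $\operatorname{tr}R_T=3$ must be used, where $R_T=R(\cdot,T)T$ on $T^\perp$. The target is exactly the trace of the shape operator in the "model" where $R_T$ has eigenvalues $(3,0,0)$: one direction behaves like curvature $3$ and two directions are flat, giving $\sqrt3\cot(\sqrt3 r)+2/r$.

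Let $S=\nabla^2 r|_{T^\perp}$ along a radial geodesic. It satisfies the matrix Riccati equation
\[
S'+S^2+R_T=0, \qquad S\sim \tfrac1r I \text{ as } r\to0,
\]
with $0\le R_T\le 3$ and $\operatorname{tr}R_T=3$.

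\emph{Lower bound: frozen-coefficient heuristic.} For constant $k\in[0,3]$, the scalar solution is $\phi_k(r)=\sqrt k\cot(\sqrt k r)$, which is decreasing in $k$. I would check that $k\mapsto\phi_k(r)$ is concave on $[0,3]$ for $0<r\le\pi/2$. If $R_T$ were diagonal with constant eigenvalues $k_i$, then $\operatorname{tr}S=\sum_i\phi_{k_i}$. This is a concave function minimized over the simplex $\{k_i\in[0,3],\ \sum k_i=3\}$, so the minimum occurs at an extreme point, which is a permutation of $(3,0,0)$. That minimum is the claimed bound.

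\emph{Lower bound: actual argument.} To make this rigorous when $R_T$ varies along the geodesic, I would write $\operatorname{tr}S=(\log\det A)'$, where $A$ is the Jacobi tensor with $A(0)=0$, $A'(0)=I$. I would then compare with the model by a Rauch/index-form argument: first show $S\ge\phi_3 I$ (Hessian comparison from $K\le3$), and then estimate the deviation $S-\phi_3I$ from below using the trace constraint. Concretely, the deviation satisfies
\[
(S-\phi_3I)'=-\big(S^2-\phi_3^2 I\big)-\big(R_T-3I\big),
\]
and $\operatorname{tr}(3I-R_T)=6$. A Riccati comparison for $\operatorname{tr}(S-\phi_3 I)$ against $2/r$ should close the argument, using Cauchy--Schwarz on the nonnegative matrix $S-\phi_3 I$ in the form $\operatorname{tr}(X^2)\le(\operatorname{tr}X)^2$.

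\emph{Main obstacle.} Passing from the frozen-coefficient concavity heuristic to the genuine non-commuting, $r$-dependent matrix Riccati inequality is the step I expect to be hardest. This is where I would concentrate the effort, trying first the trace-Riccati comparison for $u=\operatorname{tr}(S-\phi_3 I)$ just described.
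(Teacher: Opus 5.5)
Your smoothness argument and the upper bound $\Delta r\le3\cot r$ are fine and match the paper. The lower bound has a genuine gap: the concrete step you propose cannot prove it. Write $\phi_3=\sqrt3\cot(\sqrt3r)$, $X=S-\phi_3I\ge0$, $u=\operatorname{tr}X$ and $a=\frac1r-\phi_3$. Your evolution equation gives $u'=-\operatorname{tr}(X^2)-2\phi_3u+6$. Cauchy--Schwarz, $\operatorname{tr}(X^2)\le u^2$, then gives $u'\ge-u^2-2\phi_3u+6$. The target is $w=2a$, i.e.\ $\Delta r=\frac2r+\phi_3$, and it satisfies
\[
w'+w^2+2\phi_3w-6=2a^2>0.
\]
So $w$ is a strict supersolution of the same inequality, and $u$ cannot be compared with $w$. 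In fact, with $s=u+\phi_3$ your inequality becomes $s'\ge3-s^2$. Since $s=\frac1r+r+O(r^3)$, the most it can give is $s\ge\sqrt3\coth(\sqrt3r)$, that is,
$\Delta r\ge2\sqrt3\cot(\sqrt3r)+\sqrt3\coth(\sqrt3r)=\frac2r+\sqrt3\cot(\sqrt3r)-\frac25r^3+O(r^7)$.
This is strictly weaker than the claim. It is the sharp bound for radial curvatures $(3,3,-3)$. Cauchy--Schwarz is sharp only for rank-one $X$, while your model has eigenvalues $(0,a,a)$. Your rigorous argument never uses $R_T\ge0$, and without it the Riccati equation together with $\operatorname{tr}R_T=3$ and $R_T\le3$ does not imply the lemma.

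The missing idea is to use $K>0$ a second time in the Hessian comparison, so that $S\le\frac1rI$ as well as $S\ge\phi_3I$. Then every eigenvalue $x_i$ of $X$ lies in $[0,a]$. Hence $x_i^2\le a\,x_i$ and $\operatorname{tr}(X^2)\le a\,u$. This estimate is an equality on the model $(0,a,a)$. It linearizes the inequality to $u'+\bigl(\frac1r+\phi_3\bigr)u\ge6$, which $w$ solves with equality. With the integrating factor $r\sin(\sqrt3r)$, the quantity $r\sin(\sqrt3r)(u-w)$ is nondecreasing. The expansion $\Delta r=\frac3r-r+O(r^3)$ shows it tends to $0$ as $r\to0$, so $u\ge w$ on $(0,\pi/2]$. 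This is exactly the paper's proof, written there as summing $(\lambda_i-\sqrt3\cot(\sqrt3r))(\lambda_i-\frac1r)\le0$ over $i$. It makes both the concavity heuristic and the Jacobi-tensor discussion unnecessary.
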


\begin{proof} For $q \in M$, let $u,v$ be unit tangent vectors with $g(u,v) = 0$, and choose $\{e_3,e_4\}$ so that $\{ u, v, e_3, e_4 \}$ is an orthonormal basis of $T_qM$.   Then  (\ref{normal}) gives 
\begin{equation}\label{eq:sectional-sum}
3=\Ric(u,u)=K(u,v)+K(u,e_3)+K(u,e_4),
\end{equation}
where $K(u,v)$ denotes the sectional curvature of the plane spanned by $u$ and $v$, etc.  
Since every sectional curvature is positive, it follows that $K(u,v) \leq 3$.  By Klingenberg's injectivity-radius estimate (see \cite{Klingenberg}), 
\begin{equation}\label{eq:injectivity}
\inj(M,g)\ge\frac{\pi}{\sqrt{K_{\max}}}
\ge\frac{\pi}{\sqrt3}>\frac{\pi}{2}.
\end{equation}
In particular, $B_{\pi/2}(p) \setminus \{ p \}$ is contained in a normal coordinate neighborhood of $p$, so $r$ is smooth there. 

Given a point $q \in B_{\pi/2}(p) \setminus \{ p \}$ let $\lambda_i$ be the non-radial eigenvalues of $\nabla^2 r \vert_q$.  Since $0 < K \leq 3$, the 
Hessian comparison theorem implies 
\begin{equation}\label{eq:hessian-eigenvalues}
\sqrt3\cot(\sqrt3r)\le\lambda_i\le\frac1r,\quad 0<r\le\frac{\pi}{2},
\quad i=1,2,3.
\end{equation}
This implies, for each $i$, 
\begin{equation}\label{eq:eigenvalue-product}
\left(\lambda_i-\sqrt3\cot(\sqrt3r)\right)
\left(\lambda_i-\frac1r\right)\le0.
\end{equation}
Expanding and summing over $i$, we obtain (at $q$) 
\begin{equation}\label{eq:hessian-square}
|\nabla^2r|^2
\le\left(\sqrt3\cot(\sqrt3r)+\frac1r\right)\Delta r
-\frac{3\sqrt3}{r}\cot(\sqrt3r).
\end{equation}

The normalization (\ref{normal}) and the Riccati equation along a unit-speed radial geodesic imply 
\begin{equation}\label{eq:trace-riccati}
\partial_r\Delta r
=-|\nabla^2r|^2-\Ric(\partial_r,\partial_r)
=-|\nabla^2r|^2-3.
\end{equation}
Substituting \eqref{eq:hessian-square} into \eqref{eq:trace-riccati}
gives  
\begin{equation}\label{eq:trace-linear-inequality}
\partial_r\Delta r
+\left(\frac1r+\sqrt3\cot(\sqrt3r)\right)\Delta r
\ge\frac{3\sqrt3}{r}\cot(\sqrt3r)-3.
\end{equation}

Note that $f(r) = 2/r+\sqrt3\cot(\sqrt3r)$ is a solution of 
\begin{equation}\label{eq:equality}
\partial_r f(r)
+\left(\frac1r+\sqrt3\cot(\sqrt3r)\right)f(r)
=\frac{3\sqrt3}{r}\cot(\sqrt3r)-3.
\end{equation}
Subtracting \eqref{eq:equality} from 
\eqref{eq:trace-linear-inequality} gives 
\begin{equation}\label{eq:difference-inequality}
\begin{aligned}
&\frac{d}{dr}\left(\Delta r-\frac2r-\sqrt3\cot(\sqrt3r)\right)\\
&\quad+\left(\frac1r+\sqrt3\cot(\sqrt3r)\right)
\left(\Delta r-\frac2r-\sqrt3\cot(\sqrt3r)\right)\ge0,
\end{aligned}
\end{equation}
hence 
\begin{equation}\label{eq:weighted-monotonicity}
\frac{d}{dr}\left[
r\sin(\sqrt3r)
\left(\Delta r-\frac2r-\sqrt3\cot(\sqrt3r)\right)
\right]\ge0.
\end{equation}

To obtain a comparison we need to compute the limit of
the quantity in square brackets as $r \to 0$.  Using the asymptotic expansion of $\Delta r$ (see \cite{Gray}), we find 
\begin{equation}\label{eq:weighted-limit}
r\sin(\sqrt3r)
\left(\Delta r-\frac2r-\sqrt3\cot(\sqrt3r)\right)
=O(r^5), \quad r \to 0. 
\end{equation}
Integrating \eqref{eq:weighted-monotonicity} from $\varepsilon$ to $r$
and then letting $\varepsilon\downarrow0$ shows that the left-hand side
of \eqref{eq:weighted-limit} is nonnegative. Dividing by $r\sin(\sqrt3r)$ gives the lower bound in \eqref{eq:distance}.
The upper bound in \eqref{eq:distance} follows from the Laplacian comparison (since $\Ric = 3g$). 
\end{proof}

\begin{lemma}\label{lem:scalar}
For $0<r\le\pi/2$,
\begin{equation}\label{eq:scalar-bound}
0\le
\frac{2\cos r}{\sin^3r}
\left(3\cot r-\frac2r-\sqrt3\cot(\sqrt3r)\right)
\le\frac47.
\end{equation}
\end{lemma}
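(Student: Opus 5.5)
Write $D(r)=3\cot r-\frac2r-\sqrt3\cot(\sqrt3 r)$ and $Q(r)=\frac{2\cos r}{\sin^3 r}D(r)$. Since $\cos r\ge0$ and $\sin r>0$ on $(0,\pi/2]$, the lower bound $Q\ge0$ is equivalent to $D\ge0$. This follows at once from Lemma~\ref{lem:distance}, since both the lower bound $\frac2r+\sqrt3\cot(\sqrt3r)$ and the upper bound $3\cot r$ of $\Delta r$ hold along a radial geodesic. It can also be checked directly from the Laurent expansion $\cot x=\frac1x-\frac x3-\frac{x^3}{45}-\dots$. Indeed, $3\cot r=\frac3r-r-\frac{r^3}{15}-\dots$ and $\sqrt3\cot(\sqrt3 r)=\frac1r-r-\frac{r^3}{5}-\dots$. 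The singular terms cancel, the linear terms cancel, and
\[
D(r)=\Big(\frac15-\frac1{15}\Big)r^3+O(r^5)=\frac{2}{15}r^3+O(r^5).
\]
Hence $Q(r)\to \frac{4}{15}$ as $r\to0$, and $Q(\pi/2)=0$ because of the factor $\cos r$. At $r=\pi/2$ we also have $\sqrt3 r<\pi$, so $D$ is smooth and finite on $(0,\pi/2]$.

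For the upper bound $\frac47$, the plan is a one-variable analysis of $Q$ on $(0,\pi/2]$. Since $Q$ extends continuously to $[0,\pi/2]$ with endpoint values $\frac4{15}<\frac47$ and $0$, it suffices to bound the interior maximum. I would first try to show that $Q$ is not much bigger than its limit at $0$, for instance by proving that $Q$ is monotone decreasing or has a single interior critical point.

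A cleaner route is to split the interval, using the facts that $\frac{2\cos r}{\sin^3 r}$ is decreasing and $D$ is increasing. On $[0,1]$ I would use explicit Taylor remainder bounds: $\cot x\le \frac1x-\frac x3$ for $x\in(0,\pi)$ together with a matching lower estimate for $\sqrt3\cot(\sqrt3r)$. These give $D(r)\le c\,r^3$ with an explicit $c$, and then $Q\le 2c\,\frac{r^3\cos r}{\sin^3 r}$, which is controlled since $\frac{r}{\sin r}\le \frac{\pi}{2}$. On $[1,\pi/2]$ I would bound each factor crudely by its values at the subinterval endpoints, subdividing further if necessary, and use the monotonicity of each factor to keep the estimate rigorous.

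The main obstacle is keeping these elementary estimates sharp enough to stay below $\frac47$, which is more than twice the limiting value $\frac4{15}$. The difficulty is worst near $r\approx1$, where both factors are moderate. One needs a lower bound on $\sqrt3\cot(\sqrt3 r)$ that stays valid up to $\sqrt3 r\approx 2.72$, where it becomes very negative. I would handle this with the partial-fraction bound $\cot x\ge \frac1x-\frac{x}{3}-\frac{2x^3}{45}\cdot C$, valid for $x\le 2.72<\pi$, and then verify the final numeric inequality on a small number of subintervals.
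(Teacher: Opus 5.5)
\emph{Lower bound.} This part is acceptable. Lemma~\ref{lem:distance} applied to the round $S^4$, where $\Delta r=3\cot r$, gives exactly $D\ge0$. A self-contained alternative is the full series $D(r)=\sum_{k\ge2}c_k(3^k-3)r^{2k-1}$ with $c_k=2^{2k}|B_{2k}|/(2k)!>0$, convergent for $\sqrt3r<\pi$. This also justifies your unproved claim that $D$ is increasing. Your two-term truncation by itself only gives positivity near $r=0$.

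\emph{Upper bound.} This is the real content of the lemma, and it is not proved: you outline a numerical verification, and the estimates you actually name are too weak.
\begin{enumerate}[(i)]
\item $Q$ is not decreasing: $Q(0^+)=\frac4{15}\approx0.267$, $Q(1)\approx0.377$, and the maximum, about $0.419$, is near $r\approx1.26$.
\item On $[0,1]$ any admissible $c$ in $D\le cr^3$ satisfies $c\ge D(1)\approx0.208$. So $r/\sin r\le\pi/2$ gives only $Q\le2c(\pi/2)^3>1.6$. Even $(r/\sin r)^3\le\sin^{-3}1\approx1.68$ with $\cos r\le1$ gives about $0.70>\frac47$. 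You would need the coupled inequality $(r/\sin r)^3\cos r\le1$ or further subdivision.
\item On $[1,\pi/2]$, a single bound $\cot x\ge\frac1x-\frac x3-\frac{2C}{45}x^3$ valid up to $x\approx2.72$ forces $\frac{2C}{45}\approx0.084$. At $r\approx1.26$ it underestimates $\sqrt3\cot(\sqrt3r)$ by about $0.76$, and the resulting bound on $Q$ there exceeds $0.9$.
\item Exact endpoint values of $D$ plus monotonicity of both factors would work in principle. But near the maximum the slack grows like $3.4(b-a)$ against a margin of about $0.15$, so you would need many subintervals with rigorous enclosures of $\cot$. None of this is carried out.
\end{enumerate}

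\emph{The missing idea} is the paper's ODE comparison, which avoids numerics entirely. Set $u=\frac1r-\cot r$. Then $\sqrt3\cot(\sqrt3r)=\frac1r-3u-D$, and differentiating gives the Riccati equation $D'=D^2+(6u-\frac2r)D+6u^2$. The function $V=\frac{2\sin^3r}{7\cos r}$ is chosen so that $\frac{2\cos r}{\sin^3r}V\equiv\frac47$. It is a strict supersolution on $(0,\pi/2)$; this uses the elementary bound $0<u\le\frac{2\sin r}{3(1+\cos r)}$ and the positivity of an explicit sextic polynomial in $\cos r$. Near $0$ we have $D=\frac2{15}r^3+O(r^5)<V=\frac27r^3+O(r^5)$. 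At a first point $r_0$ where $D=V$ one would get $D'(r_0)\ge V'(r_0)>V^2+(6u-\frac2r)V+6u^2=D'(r_0)$, a contradiction. Hence $D<V$, i.e.\ $Q<\frac47$, on $(0,\pi/2)$, and $r=\pi/2$ follows by continuity. The same equation gives $D>0$ directly, since $D'=6u^2>0$ at a first zero.
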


\begin{proof}
For $0<r<\pi/2$, define 
\begin{equation}\label{eq:lem22-functions}
u=\frac1r-\cot r,\qquad
D=3\cot r-\frac2r-\sqrt3\cot(\sqrt3r),\qquad
V=\frac{2\sin^3r}{7\cos r}.
\end{equation}
It suffices to prove $0\le D\le V$.
Differentiating and using the fact that $\cot r=1/r-u$ and $\sqrt3\cot(\sqrt3r)=1/r-3u-D$, we find 
\begin{equation}\label{eq:lem22-ode}
D'=D^2+\left(6u-\frac2r\right)D+6u^2.
\end{equation}
By Taylor's theorem, 
\begin{equation}\label{eq:lem22-initial}
D=\frac{2}{15}r^3+O(r^5),\qquad
V=\frac27r^3+O(r^5).
\end{equation}
Thus $0<D<V$ for $r> 0$ small enough. 

We now prove two claims: 

\begin{claim} \label{TClaim1}  $u$ satisfes 
\begin{equation}\label{eq:lem22-u-bound}
0<u\le\frac{2\sin r}{3(1+\cos r)}.
\end{equation}
\end{claim}

\begin{proof}[Proof of the Claim]
The lower bound follows from the fact that $\sin r-r\cos r>0$ for $0<r\le\pi/2$.  To prove the upper bound, observe that the function $F(r)=r(2+\cos r)-3\sin r$ satisfies
$F(0)=F'(0)=0$ and $F''(r)>0$ for $0<r\le\pi/2$.  By convexity, it follows that 
\begin{align} \label{218b}
r ( 2 + \cos r) - 3 \sin r \geq 0. 
\end{align}
Rearranging this inequality gives
$1/r\le(2+\cos r)/(3\sin r)$.
Subtracting $\cot r$, we obtain
$u\le 2(1-\cos r)/(3\sin r)
=2\sin r/[3(1+\cos r)]$,
and the upper bound in (\ref{eq:lem22-u-bound}) follows. 
\end{proof}

\begin{claim} \label{TClaim2}  $V$ is a strict supersolution of
\eqref{eq:lem22-ode}. 
\end{claim}

\begin{proof}[Proof of the Claim] Fix $0<r<\pi/2$. We will use the following two estimates:
\begin{align} \begin{split} \label{ID1} 
-\frac{8\sin^3r}{7\cos r}\,u
&\ge
-\frac{16\sin^4r}{21\cos r(1+\cos r)},\\
-6u^2
&\ge
-\frac{8\sin^2r}{3(1+\cos r)^2}.
\end{split}
\end{align} 
Both follow from Claim \ref{TClaim1}.  The first follows by multiplying (\ref{eq:lem22-u-bound}) by $-8\sin^3r/(7\cos r)$.
The second is immediate.

Since $V'=V(3\cot r+\tan r)$ and $1/r=u+\cot r$,
we have
\begin{equation}\label{eq:claim24-expansion}
\begin{aligned}
&V'-V^2-\left(6u-\frac2r\right)V-6u^2\\
&\quad=
V(3\cot r+\tan r)-V^2-6uV
+2V(u+\cot r)-6u^2\\
&\quad=
V(5\cot r+\tan r)-V^2-4Vu-6u^2\\
&\quad=
\frac{10\sin^2r}{7}
+\frac{2\sin^4r}{7\cos^2r}
-\frac{4\sin^6r}{49\cos^2r}
-\frac{8\sin^3r}{7\cos r}\,u-6u^2,
\end{aligned}
\end{equation}
where in the last line we used $V=2\sin^3r/(7\cos r)$.  Applying the inequalities in
(\ref{ID1}) to the last two terms in \eqref{eq:claim24-expansion} we have 
\begin{equation}\label{eq:claim24-lower-bound}
\begin{aligned}
&V'-V^2-\left(6u-\frac2r\right)V-6u^2\\
&\quad\ge
\frac{10\sin^2r}{7}
+\frac{2\sin^4r}{7\cos^2r}
-\frac{4\sin^6r}{49\cos^2r}\\
&\qquad\quad
-\frac{16\sin^4r}{21\cos r(1+\cos r)}
-\frac{8\sin^2r}{3(1+\cos r)^2}\\
&\quad=
\frac{\sin^2r}{147c^2(1+c)^2}
\Bigl(30-52c-282c^2+496c^3\\
&\hspace{48mm}
+292c^4-24c^5-12c^6\Bigr),
\end{aligned}
\end{equation}
where $c=\cos r\in(0,1)$, and we have also used 
$\sin^2r=1-c^2$. 

The term in parentheses is positive since $0 < c < 1$ and 
\begin{equation}\label{eq:claim24-polynomial}
\begin{aligned}
&30-52c-282c^2+496c^3+292c^4-24c^5-12c^6\\
&\quad=
\left(c-\frac38\right)^2
(256c^2+688c+198)
+\frac{69-8c}{32}
+12c^4(1-c)(3+c).
\end{aligned}
\end{equation}
Therefore
\begin{equation}\label{eq:claim24-strict}
V'>V^2+\left(6u-\frac2r\right)V+6u^2
\qquad (0<r<\pi/2),
\end{equation}
and $V$ is a strict supersolution of (2.15).

We remark that numerically, the upper bound in \eqref{eq:scalar-bound} 
is approximately $0.419$. 
\end{proof}

\medskip

Finally, we claim that $D(r) > 0$ on $0 < r < \pi/2$.  Note that \eqref{eq:lem22-initial} implies that $D(r)>0$ for sufficiently small $r > 0$. 
Suppose $D$ vanishes somewhere
in $(0,\pi/2)$, and let $r_0$ be the smallest positive number such that 
$D(r_0)=0$.
Then $D(r)>0$ for $0<r<r_0$, so $D'(r_0)\le0$.
However, \eqref{eq:lem22-ode} gives
$D'(r_0)=6u(r_0)^2>0$, a contradiction.
Thus $D(r)>0$ throughout $(0,\pi/2)$.   A similar argument shows $V - D > 0$.  
Multiplying the inequality $0 < D < V$ by $2 \cos r/\sin^3r$ proves \eqref{eq:scalar-bound}
on this interval.   By continuity, the inequality holds at $r=\pi/2$.
\end{proof}

\section{Green function comparison and the mass}\label{sec:green}

The main result of this section is the following: 

\begin{theorem}\label{thm:mass}
Let $(M^4,g)$ satisfy the hypotheses of Theorem \ref{MainThm}, and the normalization (\ref{normal}).  Given $p\in M$, let 
$G = G_p$ denote the Green's function of $L = \Delta -2,$ normalized by 
\begin{align} 
G = r^{-2} + A + O(r), 
\end{align}
where $r$ denotes the distance to $p$.  Then $G$ satisfies the inequality
\begin{equation}\label{eq:green-upper}
G(x)\le
\begin{cases}
\csc^2 r(x)+\dfrac27,&0<r(x)\le\pi/2,\\[4pt]
\dfrac97,&r(x)\ge\pi/2.
\end{cases}
\end{equation}
Consequently, the constant $A$ in the expansion (\ref{Gnorm}) satisfies
\begin{equation}\label{eq:theorem-conclusion}
A\le\frac{13}{21},
\end{equation}
and the mass of the metric $h = G^2 g$ satisfies 
\begin{align} \label{massupper}
 m(h)\le\frac{45}{7}.
\end{align}
\end{theorem}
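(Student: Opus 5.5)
The plan is to build an explicit radial supersolution of $L$ that matches the singularity of $G$ at $p$, and then compare it with $G$ using the maximum principle. We use the sign convention in which $\Delta$ is the trace of the Hessian, so that $\Delta G=2G$ on $M\setminus\{p\}$. The comparison function is
\[
\Phi=\begin{cases}\csc^2 r+\tfrac27, & r\le \pi/2,\\[2pt] \tfrac97, & r\ge\pi/2.\end{cases}
\]
The motivation is that on the round $S^4$ with $\Ric=3g$, the function $\csc^2 r=r^{-2}+\tfrac13+O(r^2)$ is exactly the normalized Green's function.

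\textbf{Computation for $r\le\pi/2$.} Set $f(r)=\csc^2 r$. Then
\[
f'=-\frac{2\cos r}{\sin^3 r},\qquad f''=\frac{2(1+2\cos^2r)}{\sin^4r},
\]
and one checks that $f''+3\cot r\, f'=2f$. Since $r$ is smooth on $B_{\pi/2}(p)\setminus\{p\}$ by Lemma \ref{lem:distance}, we get
\[
\Delta\Phi-2\Phi = f'\,(\Delta r-3\cot r)-\tfrac47 .
\]
Here $f'<0$, and Lemma \ref{lem:distance} gives
\[
3\cot r-\Delta r\le 3\cot r-\frac2r-\sqrt3\cot(\sqrt3 r)=D.
\]
Therefore
\[
f'(\Delta r-3\cot r)\le \frac{2\cos r}{\sin^3 r}\,D\le\frac47
\]
by Lemma \ref{lem:scalar}. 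Hence $\Delta\Phi-2\Phi\le0$ on $0<r<\pi/2$.

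\textbf{Computation for $r\ge\pi/2$.} There $\Delta\Phi-2\Phi=-\tfrac{18}{7}<0$ trivially. At $r=\pi/2$ the two pieces agree in value, since $1+\tfrac27=\tfrac97$. Their radial derivatives also agree, since $f'(\pi/2)=0$. Because $r$ is smooth near $\{r=\pi/2\}$ by Lemma \ref{lem:distance}, $\Phi$ is $C^1$ and piecewise smooth. So $\Phi$ is a weak supersolution of $\Delta\Phi-2\Phi\le 0$ on all of $M\setminus\{p\}$; the interface contributes no singular measure. Note that no smoothness of $r$ is needed in the region $r>\pi/2$, where $\Phi$ is constant.

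\textbf{Comparison.} Set $w=\Phi-G$, so that $\Delta w-2w\le0$ weakly on $M\setminus\{p\}$. Near $p$,
\[
w=\bigl(\csc^2r-r^{-2}\bigr)+\tfrac27-A+O(r)=\tfrac13+\tfrac27-A+O(r),
\]
so $w$ is bounded. A point has zero capacity in dimension four, so a bounded weak supersolution on $M\setminus\{p\}$ extends to a weak supersolution on $M$. This removable-singularity step is the one place needing care. One clean way to do it is to run the argument with $\Phi_\varepsilon=\Phi+\varepsilon\,r^{-1}$ near $p$, or simply to use that $w$ extends continuously to $p$ and apply the weak maximum principle with a cutoff near $p$.

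The minimum principle then applies, because the zeroth-order coefficient has the good sign. Indeed, at a negative interior minimum we would have $\Delta w\ge0$ and $-2w>0$, so $\Delta w-2w>0$, contradicting $\Delta w-2w\le 0$. Hence $w\ge0$, which is \eqref{eq:green-upper}.

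\textbf{Consequences for $A$ and the mass.} Letting $r\to0$ in $w\ge0$ gives
\[
A\le \tfrac13+\tfrac27=\tfrac{13}{21}.
\]
For the mass, I would invoke the standard relation from \cite{GM} between the mass of $h=G^2g$ and the constant term $A$ of $G$ in dimension four. This relation is affine and increasing in $A$, with the normalization \eqref{Gnorm}. Substituting $A\le 13/21$ into it should give \eqref{massupper}, i.e. $m(h)\le 45/7$; I have not checked the normalization constants in that formula.

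The main obstacle is the justification of the comparison across the pole and across the interface $r=\pi/2$. The differential inequality itself reduces entirely to Lemmas \ref{lem:distance} and \ref{lem:scalar}.
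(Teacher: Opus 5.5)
Your barrier and differential inequality are the paper's own. You use $\csc^2 r+\frac27$ on $B_{\pi/2}(p)$ and $\frac97$ outside, the model identity $f''+3\cot r\,f'=2f$, Lemmas \ref{lem:distance} and \ref{lem:scalar} to get $f'(\Delta r-3\cot r)\le\frac47$, and $C^1$ matching at $r=\pi/2$. The only variation is at the pole. The paper computes the flux of $\partial_r\Psi$ to get $(\Delta-2)\Psi\le-4\pi^2\delta_p+\frac47$ on $M$, then subtracts $(\Delta-2)G=-4\pi^2\delta_p$. You instead cancel the $r^{-2}$ singularities first and remove the point singularity of the bounded difference $w$. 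Both work, and $A\le\frac13+\frac27=\frac{13}{21}$ follows correctly.

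One technical caution: your pointwise minimum argument is not valid as written at $p$ or on $\{r=\pi/2\}$, where $w$ is not $C^2$. Use the weak maximum principle instead (test against $w^-=\max(-w,0)$). This needs $w\in H^1(M)$, i.e.\ $\nabla(G-r^{-2})\in L^2$ near $p$, which the paper takes from \cite{V}.

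The real gap is the mass bound, which you do not derive. What is needed is the exact relation $m(h)=12A-1$ (Viaclovsky \cite{V}, as used in the paper). It holds because $g$ is Einstein with $\Ric=3g$ and $A$ is read off from the expansion in the $g$-geodesic distance. These are not conformal normal coordinates, since $\Ric(p)=3g\neq0$. The $-1$ is the contribution of the curvature term $-\frac13R_{ikjl}x^kx^l$ of $g$ in normal coordinates to the ADM flux of $h$.

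This constant is not cosmetic. Without the $-1$ you would only get $m(h)\le\frac{52}{7}$, which exceeds $\frac{22}{3}$ and is useless for Theorem \ref{MainThm}. With it, $m(h)\le 12\cdot\frac{13}{21}-1=\frac{45}{7}$.

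Your motivating claim would also mislead any attempt to calibrate this formula. On the round $S^4$ with $\Ric=3g$,
\[
\csc^2 r=\frac{1}{2(1-\cos r)}+\frac{1}{2(1+\cos r)},
\]
which is the sum of the Green's functions with poles at $p$ and $-p$. The normalized Green's function is $\frac14\csc^2(r/2)=r^{-2}+\frac1{12}+O(r^2)$. So $A=\frac1{12}$ on the round sphere, consistent with $m=12A-1=0$. Calibrating an affine mass formula with your value $A=\frac13$ would give the wrong constant. This misidentification does not affect the barrier argument itself, which only uses $(\Delta-2)\csc^2 r=0$ in the model.
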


\medskip 

\begin{remark} The relationship between the mass $m(h)$ and the constant $A$ was proved by Viaclovsky in \cite{V}.  The point is that conformal normal coordinates are not needed thanks to the Einstein condition. 
\end{remark}

\medskip 

\begin{proof}
On $M\setminus\{p\}$ define 
\begin{equation}\label{eq:barrier}
\Psi(x)=
\begin{cases}
\csc^2 r(x),&0<r(x)\le\pi/2,\\
1,&r(x)\ge\pi/2.
\end{cases}
\end{equation}
Note that $\Psi(\pi/2) = 1$ and $(\partial_r \Psi)(\pi/2) = 0$.  Also, the cut locus lies in the set $\{ r > \pi/2 \}$, where $\Psi$ is constant. 

In $B_{\pi/2}(p)$ we have 
\begin{equation}\label{eq:barrier-derivatives}
\frac{d}{dr}\csc^2r=-\frac{2\cos r}{\sin^3r},
\qquad
\frac{d^2}{dr^2}\csc^2r=\frac2{\sin^2r}+\frac{6\cos^2r}{\sin^4r}.
\end{equation}
Since $\Psi$ is radial, these imply  
\begin{equation}\label{eq:barrier-laplacian}
\begin{aligned}
(\Delta-2)\Psi
&=\frac2{\sin^2r}+\frac{6\cos^2r}{\sin^4r}
-\frac{2\cos r}{\sin^3r}\Delta r-\frac2{\sin^2r}\\
&=\frac{2\cos r}{\sin^3r}(3\cot r-\Delta r),
\qquad 0<r<\pi/2.
\end{aligned}
\end{equation}
By Lemmas \ref{lem:distance} and \ref{lem:scalar}, and the fact that $\cos r\ge0$ for $0 < r < \pi/2$, we have 
\begin{equation}\label{eq:error}
0\le (\Delta-2)\Psi
\le\frac{2\cos r}{\sin^3r}
\left(3\cot r-\frac2r-\sqrt3\cot(\sqrt3r)\right)
\le\frac47
\qquad(0<r<\pi/2).
\end{equation}
Since $\Psi \equiv 1$ for $r > \pi/2$, 
\begin{equation}\label{eq:exterior-error}
(\Delta-2)\Psi=-2\qquad(r>\pi/2).
\end{equation}

Near $p$, the expansion of $\csc^2r$ gives
\begin{equation}\label{eq:barrier-expansion}
\Psi=r^{-2}+\frac13+O(r^2),\qquad
\partial_r\Psi=-2r^{-3}+O(r).
\end{equation}
Since the area of a small geodesic sphere is
$2\pi^2\varepsilon^3(1+O(\varepsilon^2))$, 
\begin{equation}\label{eq:barrier-flux}
\lim_{\varepsilon\downarrow0}
\int_{\partial B_\varepsilon(p)}\partial_r\Psi\,d\sigma_{g}
=-4\pi^2.
\end{equation}
\medskip

\begin{claim} \label{DisClaim} In the sense of distributions,
\begin{equation}\label{eq:distributional-barrier}
(\Delta-2)\Psi\le-4\pi^2\delta_p+\frac47
\quad\hbox{on }M.
\end{equation}
\end{claim}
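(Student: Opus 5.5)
We have to show that for every nonnegative test function $\varphi\in C^\infty(M)$,
\[
\int_M \Psi\,(\Delta-2)\varphi\,dv_g \le -4\pi^2\varphi(p)+\frac47\int_M\varphi\,dv_g .
\]
The plan is to split $M$ into the inner region $\Omega_\varepsilon=\{\varepsilon<r<\pi/2\}$ and the outer region $\{r>\pi/2\}$. On each piece we integrate by parts, then collect the boundary terms on $\partial B_\varepsilon(p)$ and on $\{r=\pi/2\}$, and finally let $\varepsilon\downarrow0$.

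\textbf{Regularity across the interface.} By Lemma \ref{lem:distance}, $r$ is smooth near $\{r=\pi/2\}$, so this level set is a smooth hypersurface. We have $\Psi(\pi/2)=1$ and $\partial_r\Psi(\pi/2)=0$, so $\Psi$ is $C^1$ across the hypersurface. In fact $\Psi$ is $C^{1,1}$, hence $W^{2,\infty}$, on $M\setminus B_\varepsilon(p)$. The cut locus lies in $\{r>\pi/2\}$, where $\Psi\equiv1$, so it plays no role.

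\textbf{Green's identity on each region.} On $\Omega_\varepsilon$ we write
\[
\int_{\Omega_\varepsilon}\Psi\Delta\varphi=\int_{\Omega_\varepsilon}\varphi\Delta\Psi+\int_{\partial\Omega_\varepsilon}(\Psi\partial_\nu\varphi-\varphi\partial_\nu\Psi).
\]
The same identity holds on $\{r>\pi/2\}$, where $\Psi$ is constant. At $\{r=\pi/2\}$ the outward normals of the two regions are opposite. Since $\Psi$ and $\partial_r\Psi$ are continuous there (with $\partial_r\Psi=0$), the interface terms from the two sides cancel.

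In the interior we use \eqref{eq:error} and \eqref{eq:exterior-error}, together with $\varphi\ge0$. This bounds the bulk contributions $\int\varphi(\Delta-2)\Psi$ by $\frac47\int\varphi$. On the outer region the integrand is $-2\varphi\le0$, which is certainly $\le\frac47\varphi$.

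\textbf{Boundary terms at the pole.} On $\partial B_\varepsilon(p)$ the outward normal of $\Omega_\varepsilon$ is $-\partial_r$, so the boundary contribution is
\[
-\int_{\partial B_\varepsilon}\Psi\,\partial_r\varphi+\int_{\partial B_\varepsilon}\varphi\,\partial_r\Psi .
\]
By \eqref{eq:barrier-expansion}, $\Psi=O(\varepsilon^{-2})$ while the sphere has area $O(\varepsilon^3)$, so the first term is $O(\varepsilon)\to0$. For the second term we write $\varphi=\varphi(p)+O(\varepsilon)$ and use \eqref{eq:barrier-flux}, together with $\partial_r\Psi=O(\varepsilon^{-3})$. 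This shows the second term tends to $-4\pi^2\varphi(p)$. Finally, $\Psi\in L^1$ near $p$, so $\int_{B_\varepsilon}\Psi(\Delta-2)\varphi\to0$.

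Combining these three steps gives the claimed inequality. The main point needing care is the interface $\{r=\pi/2\}$: one must verify that $\Psi$ is genuinely $C^1$ there and that no singular measure appears. This follows from $\partial_r\Psi(\pi/2)=0$ together with the smoothness of $r$ near that level set given by Lemma \ref{lem:distance}. Once this is settled, the rest is routine.
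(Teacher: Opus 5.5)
Your proposal is correct and follows essentially the same route as the paper: Green's formula on $B_{\pi/2}(p)\setminus\overline{B_\varepsilon(p)}$ and on $\{r>\pi/2\}$, with the interface terms on $\{r=\pi/2\}$ cancelling because $\Psi$ and $\partial_r\Psi$ match there. The bulk term is then bounded by $\frac47\int_M\varphi$ using $\varphi\ge0$, and the $\varepsilon\downarrow0$ limit of the terms on $\partial B_\varepsilon(p)$ gives $-4\pi^2\varphi(p)$.
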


\begin{proof}[Proof of the claim]
Define the bounded function $f$ almost everywhere on $M$ by
\begin{equation*}
\label{eq:claim-Psi-density}
f(x)=
\begin{cases}
\displaystyle
\frac{2\cos r}{\sin^3r}
\bigl(3\cot r-\Delta r\bigr),
& 0<r<\pi/2,\\[6pt]
-2,
& r>\pi/2.
\end{cases}
\end{equation*}
Equations \eqref{eq:barrier-laplacian} and \eqref{eq:exterior-error} show that
$(\Delta-2)\Psi=f$ pointwise on
$M\setminus\bigl(\{p\}\cup\{r=\pi/2\}\bigr)$,
where $\Psi$ is smooth.
Moreover, \eqref{eq:error} gives $0\leq f\leq4/7$ inside $B_{\pi/2}(p)$, 
while $f=-2$ outside the ball.  It follows that $f\leq4/7$ almost everywhere on $M$.

Let $\varphi\in C^\infty(M)$ be a non-negative test function, and $0<\varepsilon<\pi/2$. Set
$\Omega_\varepsilon
=B_{\pi/2}(p)\setminus\overline{B_\varepsilon(p)}$
and $E=M\setminus\overline{B_{\pi/2}(p)}$.  Applying Green's formula on $\Omega_\varepsilon$ gives
\begin{align*}
\int_{\Omega_\varepsilon}
\Psi(\Delta-2)\varphi\,dv_g
={}&\int_{\Omega_\varepsilon}f\varphi\,dv_g\\
&+\int_{\partial B_{\pi/2}(p)}
\left(\Psi\,\partial_r\varphi
-\varphi\,\partial_r\Psi\right)d\sigma_g\\
&+\int_{\partial B_\varepsilon(p)}
\left(-\Psi\,\partial_r\varphi
+\varphi\,\partial_r\Psi\right)d\sigma_g\\
={}&\int_{\Omega_\varepsilon}f\varphi\,dv_g
+\int_{\partial B_{\pi/2}(p)}
\partial_r\varphi\,d\sigma_g\\
&+\int_{\partial B_\varepsilon(p)}
\left(-\Psi\,\partial_r\varphi
+\varphi\,\partial_r\Psi\right)d\sigma_g.
\end{align*}
Similarly, Green's formula on $E$ gives
\begin{align*}
\int_E\Psi(\Delta-2)\varphi\,dv_g
={}&\int_E f\varphi\,dv_g\\
&+\int_{\partial B_{\pi/2}(p)}
\left(-\Psi\,\partial_r\varphi
+\varphi\,\partial_r\Psi\right)d\sigma_g\\
={}&\int_E f\varphi\,dv_g
-\int_{\partial B_{\pi/2}(p)}
\partial_r\varphi\,d\sigma_g.
\end{align*}
Upon adding, the integrals of
$\partial_r\varphi$ over $\partial B_{\pi/2}(p)$
cancel and we obtain
\begin{align}
\label{eq:claim-Psi-Green}
\begin{split}
&\int_{M\setminus B_\varepsilon(p)}
\Psi(\Delta-2)\varphi\,dv_{g}\\
&\qquad=
\int_{M\setminus B_\varepsilon(p)}
f\varphi\,dv_{g}
+\int_{\partial B_\varepsilon(p)}
\left(
-\Psi\,\partial_r\varphi
+\varphi\,\partial_r\Psi
\right)d\sigma_{g}.
\end{split}
\end{align}

By \eqref{eq:barrier-expansion}, $\Psi=\varepsilon^{-2}+O(1)$ and
$\partial_r\Psi=-2\varepsilon^{-3}+O(\varepsilon)$
on $\partial B_\varepsilon(p)$.
Moreover,
$\varphi=\varphi(p)+O(\varepsilon)$,
$\partial_r\varphi=O(1)$, and
$\operatorname{Area}_{g}(\partial B_\varepsilon(p))
=2\pi^2\varepsilon^3(1+O(\varepsilon^2))$.
Consequently,
\begin{equation*}
\label{eq:claim-Psi-boundary}
\begin{aligned}
\int_{\partial B_\varepsilon(p)}
\Psi\,\partial_r\varphi\,d\sigma_{g}
&=O(\varepsilon),\\
\int_{\partial B_\varepsilon(p)}
\varphi\,\partial_r\Psi\,d\sigma_{g}
&=-4\pi^2\varphi(p)+O(\varepsilon).
\end{aligned}
\end{equation*}

The function $\Psi=O(r^{-2})$ is locally integrable
in dimension four, and $f$ is bounded, hence we can take the limit $\varepsilon\downarrow0$ in
(\ref{eq:claim-Psi-Green}).
For every nonnegative $\varphi\in C^\infty(M)$,
the resulting identity and the bound $f\leq4/7$ give
\begin{equation*}
\label{eq:claim-Psi-distribution}
\begin{aligned}
\bigl\langle(\Delta-2)\Psi,\varphi\bigr\rangle
&=\int_M\Psi(\Delta-2)\varphi\,dv_{g}\\
&=-4\pi^2\varphi(p)+\int_M f\varphi\,dv_{g}\\
&\leq-4\pi^2\varphi(p)
+\frac47\int_M\varphi\,dv_{g}.
\end{aligned}
\end{equation*}
\end{proof}

Subtracting the distributional identity $(\Delta-2)G=-4\pi^2\delta_p$ from the distributional inequality
\eqref{eq:distributional-barrier} cancels the Dirac masses at $p$
and gives $(\Delta-2)(\Psi-G)\le4/7$ on $M$.
Since $(\Delta-2)(2/7)=-4/7$, adding the constant $2/7$ to
$\Psi-G$ yields
\begin{equation}\label{eq:weak}
(\Delta-2)\left(\Psi+\frac27-G\right)\le0
\quad\hbox{on }M
\end{equation}
in the distributional sense.  However, the expansions (\ref{Gnorm}) and (\ref{eq:barrier-expansion}) imply
\begin{equation}\label{eq:regular-difference}
\Psi+\frac27-G
=\frac{13}{21}-A+O(r),
\end{equation}
hence $\Psi+\frac27-G \in H^1(M)$ (see \cite{V}, Proposition 2.1).  Then the weak maximum principle implies \eqref{eq:green-upper}. 

Taking $r \to 0$ in \eqref{eq:regular-difference} gives
\begin{equation}\label{eq:constant-term-bound}
0\le\lim_{x\to p}\left(\Psi(x)+\frac27-G(x)\right)
=\frac{13}{21}-A,
\end{equation}
and since $m(g) = 12 A - 1$ the mass estimate follows. 
\end{proof}

\section{Proof of Theorem \ref{MainThm}}\label{sec:rigidity}

\begin{proof}[Proof of Theorem \ref{MainThm}]
By Synge's theorem, $M$ is simply connected; in particular, $b_1(M)=0$.
Let $b_2^+$ and $b_2^-$ be the dimensions of the spaces of self-dual and anti-self-dual harmonic two-forms.  Then 
\begin{equation}\label{eq:betti-topology}
\chi(M)=2+b_2^++b_2^-,\qquad
\tau(M)=b_2^+-b_2^-.
\end{equation}
If $g$ is self-dual or anti-self-dual, Hitchin's theorem [16]
implies that $(M,g)$ is homothetically isometric to the round
$S^4$ or to $\pm\mathbb{CP}^2$ with the Fubini--Study metric.
This proves the theorem in this case.

Suppose, therefore, that $g$ is neither self-dual nor
anti-self-dual. Combining the Gursky--LeBrun inequality
(1.2) with the topological assumption (1.1), we obtain
\begin{equation}
    \frac{6}{5}\chi(M)
    < 2\chi(M)-3|\tau(M)|
    \le 4.
\end{equation}
Thus $\chi(M)<10/3$, and another application of (1.2) gives
\begin{equation}
    |\tau(M)|
    < \frac{4}{15}\chi(M)
    < \frac{8}{9}.
\end{equation}
Since $\tau(M)$ is an integer, it follows that $\tau(M)=0$.
Assumption (1.1) now gives $\chi(M)\le 2$, while (4.1)
gives $\chi(M)\ge 2$. Consequently,
\begin{equation}
    \chi(M)=2,
    \qquad
    b_2(M)=0.
\end{equation}

If $g$ were not round, \cite[Theorem~1.5]{GM} would imply
\begin{equation}\label{eq:gap-contradiction}
m(h)\ge\frac{60}{4}-9+\frac43
=\frac{22}{3}>\frac{45}{7}, 
\end{equation}
contradicting Theorem~\ref{thm:mass}. Hence $g$ is round and $M = S^4$.  
\end{proof}


\end{document}